\documentclass[10pt]{article}


\usepackage{amsthm,amsmath,amssymb}

\usepackage{graphicx}

\usepackage[colorlinks=true,citecolor=black,linkcolor=black,urlcolor=blue]{hyperref}


\sloppy

\theoremstyle{plain}
\newtheorem{theorem}{Theorem}[section]
\newtheorem{lemma}[theorem]{Lemma}

\newtheorem{proposition}[theorem]{Proposition}

\theoremstyle{definition}
\newtheorem{definition}[theorem]{Definition}
\newtheorem{example}[theorem]{Example}

\theoremstyle{remark}
\newtheorem{remark}[theorem]{Remark}



\title{\bf Generalized Dehn-Sommerville relations for hypergraphs}


\author{Vladimir Gruji\'c\\
\small Faculty of Mathematics\\[-0.8ex]
\small Belgrade University\\[-0.8ex]
\small Serbia\\
\small\tt vgrujic@matf.bg.ac.rs\\
\and
Tanja Stojadinovi\'c\\
\small Faculty of Mathematics\\[-0.8ex]
\small Belgrade University\\[-0.8ex]
\small Serbia\\
\small\tt tanjas@matf.bg.ac.rs
\and
Du\v{s}ko Joji\'c\\
\small Faculty of Science\\[-0.8ex]
\small University of Banja Luka\\[-0.8ex]
\small Bosnia and Herzegovina\\
\small\tt ducci68@blic.net}


\date{
\small Mathematics Subject Classifications: 16T05, 16T30, 05C65,
05E45}

\begin{document}

\maketitle

\begin{abstract}

The generalized Dehn-Sommerville relations determine the odd
subalgebra of the combinatorial Hopf algebra. We introduce a class
of eulerian hypergraphs that satisfy the generalized
Dehn-Sommerville relations for the combinatorial Hopf algebra of
hypergraphs. We characterize a wide class of eulerian hypergraphs
according to the combinatorics of underlying clutters. The
analogous results hold for simplicial complexes by the isomorphism
which is induced from the correspondence of clutters and
simplicial complexes.

\bigskip\noindent \textbf{Keywords}: Hopf algebra, hypergraph, clutter, simplicial complex,
generalized Dehn-Sommerville relations

\end{abstract}

\section{Introduction }

The combinatorial Hopf algebra is a graded connected Hopf algebra
with the multiplicative linear functional. The theory of
combinatorial Hopf algebras is established in \cite{ABS}. The
motivation and terminology come from the main example of the
Rota's Hopf algebra of finite graded posets \cite{JR}. The
generalized Dehn-Sommerville relations determine the odd
subalgebra of the combinatorial Hopf algebra. In the case of
Rota's Hopf algebra of posets the generalized Dehn-Sommerville
relations are precisely the relations introduced by Bayer and
Billera for the flag $f-$vector of a finite graded poset
\cite{BB}. The eulerian posets satisfy these relations and
generate the proper subalgebra of the odd subalgebra of posets.

The combinatorial Hopf algebra of hypergraphs extends the
chromatic Hopf algebra of graphs. The chromatic symmetric function
of a hypergraph, firstly introduced in \cite{SS}, appears as the
image of the canonical morphism to quasi-symmetric functions. The
classes of hypergraphs closed under disjoint sums and restrictions
form Hopf subalgebras of hypergraphs. Clutters and building sets
are objects of this type. They have an important role in
combinatorial algebra and geometry.

The combinatorial Hopf algebra of building sets is extensively
studied in \cite{GS}. All the main properties of a building set,
hypergraph and simplicial complex essentially depend on the
underlying clutters: minimal elements of the generating
collection, minimal edges and minimal non-faces. We use this fact
to elaborate the results from \cite{GS} for building sets to
hypergraphs and simplicial complexes. Also we correct the
deficiency in the formulation of \cite[Theorem 8.11]{GS}, where
the characterization of eulerian building sets is given in terms
of combinatorics of underlying clutters.

We say that a hypergraph is eulerian if the Euler character is
annihilated at all induced subhypergraphs. The eulerian
hypergraphs generate the proper subalgebra of the odd subalgebra
of hypergraphs. A hypergraph is eulerian if and only if the
clutter of its minimal edges is eulerian. To a clutter is
associated the intersection complex called the nerve. We obtain
that an odd clutter whose the nerve is a clique complex of a
chordal graph is eulerian. The converse is true under additional
condition that any proper subclutter is also a proper restriction.
Another characterization follows from Stanley's expansion of the
chromatic symmetric function of a hypergraph into the power sum
symmetric functions.

There is a natural correspondence of clutters and simplicial
complexes. To a clutter is associated the independence complex, by
analogy to independence complexes of simple graphs. On the other
hand, a simplicial complex determines the clutter of its minimal
nonfaces. A coloring of a clutter is equivalent to a partitioning
of its independence complex. Henceforth the canonical morphism to
a simplicial complex associates the generating function of its
partition functions. The eulerian simplicial complexes correspond
to independence complexes of eulerian clutters.

\section{Preliminaries from combinatorial Hopf algebras}

We fix some notations and review basic definitions and facts from
the theory of combinatorial Hopf algebras developed in \cite{ABS}.
A {\it composition} $\alpha\models n$ of an integer
$n\in\mathbb{N}$ is a sequence $\alpha=(a_1,\ldots,a_k)$ of
positive integers with $a_1+\cdots+a_k=n$. Let $k(\alpha)=k$ be
the length of $\alpha$. A {\it partition} $\lambda\vdash n$ is a
composition $\lambda\models n$ with $a_1\geq a_2\geq\cdots\geq
a_k$.

The {\it combinatorial Hopf algebra} $(\mathcal{H},\zeta)$ (CHA
for short) over a field $\mathbf{k}$ is a graded connected Hopf
algebra $\mathcal{H}=\oplus_{n\geq 0}\mathcal{H}_n$ equipped with
the multiplicative linear functional
$\zeta:\mathcal{H}\rightarrow\mathbf{k}$ called the {\it
character}. The terminal object in the category of CHA's is the
CHA of quasisymmetric functions $(QSym,\zeta_Q)$. The basic
reference for quasisymmetric functions is \cite{EC}. Let
$\{M_\alpha\}_{\alpha\models n, n\in\mathbb{N}}$ be the linear
basis of monomial quasisymmetric functions, where

\[M_{(a_1,\ldots,a_k)}=\sum_{i_1<\cdots<
i_k}x_{i_1}^{a_1}\cdots x_{i_k}^{a_k}.\] The character $\zeta_Q$
on the monomial basis is defined by
$\zeta_Q(M_\alpha)=\left\{\begin{array}{cc} 1, & \alpha=(n) \
\mbox{or} \ ()\\ 0, & \mbox{otherwise}\end{array}\right..$ The
unique canonical morphism
$\Psi:(\mathcal{H},\zeta)\rightarrow(QSym,\zeta_Q)$ is given on
homogeneous elements by

\begin{equation}\label{canonical}
\Psi(h)=\sum_{\alpha\models n}\zeta_\alpha(h)M_\alpha, \
h\in\mathcal{H}_n,
\end{equation} where $\zeta_\alpha$ is the convolution product

\begin{equation}\label{convprod}
\zeta_\alpha=\zeta_{a_1}\cdots\zeta_{a_k}:\mathcal{H}
\stackrel{\Delta^{(k-1)}}\longrightarrow\mathcal{H}^{\otimes
k}\stackrel{proj}\longrightarrow\mathcal{H}_{a_1}\otimes\cdots\otimes\mathcal{H}_{a_k}
\stackrel{\zeta^{\otimes k}}\longrightarrow\mathbf{k}.
\end{equation} The algebra of symmetric functions $Sym$ is the subalgebra of
$QSym$ and it is the terminal object in the category of
cocommutative combinatorial Hopf algebras.

A graded connected bialgebra $\mathcal{H}$ possesses the antipode

\begin{equation}\label{antipode}
S=\sum_{k\geq 0}(-1)^km^{(k-1)}\circ\pi^{\otimes k}
\circ\Delta^{(k-1)},
\end{equation} where $\pi=I-u\epsilon$ and $I,u,\epsilon$ are identity map,
unit and counit of $\mathcal{H}$ respectively. The convolution
inverse $\zeta^{-1}$, called the {\it M$\ddot{o}$bius character}
of $\mathcal{H}$, is the composition $\zeta^{-1}=\zeta\circ S$.
Let $\overline{\zeta}$ be the {\it conjugate character} defined on
homogeneous elements by $\overline{\zeta}(h)=(-1)^n\zeta(h), \
h\in\mathcal{H}_n$. Then the {\it Euler character} $\chi$ of CHA
$(\mathcal{H},\zeta)$ is defined by
\[\chi=\overline{\zeta}\zeta:\mathcal{H}\stackrel{\Delta}\longrightarrow\mathcal{H}\otimes\mathcal{H}
\stackrel{\overline{\zeta}\otimes\zeta}\longrightarrow\mathbf{k}\otimes\mathbf{k}\stackrel{m}\rightarrow\mathbf{k}.\]
The {\it odd subalgebra} $S_{-}(\mathcal{H},\zeta)$ is the largest
graded Hopf subalgebra of $\mathcal{H}$ on which the character is
odd $\zeta^{-1}=\overline{\zeta}$. A morphism
$\phi:(\mathcal{H}_1,\zeta_1)\rightarrow(\mathcal{H}_2,\zeta_2)$
preserves the odd subalgebra
$\phi(S_-(\mathcal{H}_1,\zeta_1))\subset
S_-(\mathcal{H}_2,\zeta_2).$ The {\it generalized Dehn-Sommerville
relations} for CHA $(\mathcal{H},\zeta)$

\begin{equation}\label{gdsr}
(id\otimes(\overline{\zeta}-\zeta^{-1})\otimes
id)\circ\Delta^{(2)}(h)=0
\end{equation} describe the sufficient and necessary conditions for
homogeneous elements to belong to the odd subalgebra
$S_-(\mathcal{H},\zeta)$. Given a homogeneous element $h\in
S_-(\mathcal{H},\zeta)$ of the rank $n$, it follows from
$(\ref{canonical})$ and the generalized Dehn-Sommerville relations
for quasi-symmetric functions (\cite{ABS}, Example 5.10) that

\begin{equation}\label{relation}
\sum_{j=0}^{a_i}(-1)^{j}\zeta_{(a_1,\ldots,a_{i-1},j,a_i-j,a_{i+1},\ldots,a_k)}(h)=0,
\end{equation}
for each $\alpha=(a_1,\ldots,a_k)\models n$ and
$i\in\{1,\ldots,k\}$.

\paragraph{Rota's Hopf algebra of posets}
The set of isomorphism classes of finite graded posets
$\mathbf{P}$ spans the Hopf algebra of posets
$\mathcal{H}(\mathbf{P})$ with the product $P_1\cdot P_2=P_1\times
P_2$ and the coproduct

\[\Delta(P)=\sum_{0_P\leq x\leq 1_P}[0,x]\otimes[x,1].\] Let $\zeta(P)=1$ for any poset $P$.
The antipode formula $(\ref{antipode})$ gives

\[S(P)=\sum_{k\geq 1}(-1)^k\sum_{0<x_1<\cdots<x_{k-1}<1}[0,x_1]\otimes\cdots\otimes[x_{k-1},1].\]
The M$\ddot{\mathrm{o}}$bius character $\zeta^{-1}=\zeta\circ S$
is the classical M$\ddot{\mathrm{o}}$bius function on posets. A
poset $P$ is eulerian if and only if the Euler character is
annihilated at all closed intervals $\chi([x,y])=0, x<y\in P$.

The canonical morphism
$\Psi:(\mathcal{H}(\mathbf{P}),\zeta)\rightarrow(QSym,\zeta_Q)$ to
a poset $P$ assigns the generating function of its flag $f-$vector

\[\Psi(P)=\sum_{\alpha\models n}f_\alpha(P)M_\alpha.\] The
relations $(\ref{relation})$ are precisely the Bayer-Billera
relations for the flag $f-$vector of an eulerian poset $P$.

\section{Hopf algebra of hypergraphs}

A {\it hypergraph} $H$ on the finite vertex set $V$ is a
collection $H\subset\mathcal{P}(V)$ of nonempty subsets of $V$.
The subsets $e\in H$ are called edges and we require that any edge
has at least two vertices. The hypergraph $D$ with no edges is the
discrete hypergraph on $V$. Two hypergraphs $H_1$ and $H_2$ are
isomorphic if there is a bijection $f:V(H_1)\rightarrow V(H_2)$
such that $e\in H_1$ if and only if $f(e)\in H_2$. A restriction
of the hypergraph $H$ on the vertex set $I\subset V(H)$ is the
hypergraph $H|_I=\{e\in H | e\subset I\}$.

Let $\mathcal{H}(\mathbf{H})=\oplus_{n\geq
0}\mathcal{H}_n(\mathbf{H})$ be the graded $\mathbf{k}-$vector
space linearly spanned by the set $\mathbf{H}$ of all isomorphism
classes of finite hypergraphs, where the grading is given by the
number of vertices. Define the product and the coproduct by

\[H_1\sqcup H_2=\{e\subset V(H_1)\sqcup V(H_2) \ | \  e\in H_1 \
\mbox{or} \ e\in H_2\}, \] \[ \Delta(H)=\sum_{I\subset
V(H)}H|_I\otimes H|_{V(H)\setminus I}.\] Then the space
$\mathcal{H}(\mathbf{H})$ is a graded connected commutative and
cocommutative Hopf algebra. The unit element is the hypergraph
$H_\emptyset$ on the empty set of vertices and the
counit is defined by $\epsilon(H)=\left\{\begin{array}{ll} 1, & H=H_\emptyset \\
0, & \mbox{otherwise}\end{array}\right..$ The antipode of
$\mathcal{H}(\mathbf{H})$ is determined by $(\ref{antipode})$ with

\begin{equation}\label{antiph}
S(H)=\sum_{k\geq 1}(-1)^k\sum_{I_1\sqcup\ldots\sqcup
I_k=V}H|_{I_1}\sqcup\ldots\sqcup H|_{I_k},
\end{equation}
where the sum goes over all ordered decompositions of the vertex
set.

Let $\zeta_\mathbf{H}(H)=\left\{\begin{array}{ll} 1, & H \ \mbox{is discrete} \\
0, & \mbox{otherwise}\end{array}\right.$. We obtain the CHA of
hypergraphs $(\mathcal{H}(\mathbf{H}), \zeta_\mathbf{H})$. The
canonical morphism $(\ref{canonical})$ of
$\mathcal{H}(\mathbf{H})$ to quasisymmetric functions is
determined by

\[\Psi(H)=\sum_{\alpha\models
n}(\zeta_\mathbf{H})_\alpha(H)M_\alpha, \
H\in\mathcal{H}_n(\mathbf{H}).\] For a composition
$\alpha=\{a_1,\ldots,a_k\}\models n$ the coefficient
$(\zeta_\mathbf{H})_\alpha(H)$ is the number of ordered
decomposition $(I_1,\ldots,I_k)$ of the vertex set $V$ such that
$H|_{I_j}$ is discrete of the rank $a_j$ for all $j=1,\ldots,k$.
Since $\mathcal{H}(\mathbf{H})$ is a cocommutative Hopf algebra,
the function $\Psi(H)$ is symmetric. It is easily seen to be the
generating function $\Psi(H)=\sum_{f}\prod_{v\in V}x_{f(v)}$ of
{\it proper colorings} $f:V\rightarrow \mathbb{N}$ of a hypergraph
$H$. Recall that a coloring $f:V\rightarrow\mathbb{N}$ is proper
if it is not monochromatic on edges of $H$.

The principal specialization $\chi(H,m)=\Psi(H)(1^m)$ defines the
{\it chromatic polynomial} of a hypergraph $H$. The value
$\chi(H,m)$ is the number of proper colorings of the hypergraph
$H$ by at most $m$ colors. By definition of the character
$\zeta_\mathbf{H}$ we have $\chi(H,m)=\zeta_\mathbf{H}^m(H)$. It
follows from the antipode formula $(\ref{antiph})$ that

\begin{equation}\label{-1}
\chi(H,-1)=\zeta_\mathbf{H}^{-1}(H)=\zeta_\mathbf{H}\circ
S(H)=\sum_{\alpha\models
n}(-1)^{k(\alpha)}(\zeta_\mathbf{H})_\alpha(H).
\end{equation}
Recall that if $H$ is a simple graph, the chromatic polynomial
$\chi(H,m)$ calculated at $m=-1$ gives the number of acyclic
orientations of $H$.

Let $\mathbf{S}$ be a family of isomorphism classes of hypergraphs
closed under taking disjoint sums and restrictions. The
$\mathbf{k}-$vector space $\mathcal{H}(\mathbf{S})$ with the basis
$\mathbf{S}$ and with the character
$\zeta_\mathbf{S}=\zeta|_{\mathcal{H}(\mathbf{S})}$ is a
combinatorial Hopf subalgebra of the algebra of hypergraphs
$(\mathcal{H}(\mathbf{H}),\zeta_\mathbf{H})$.

\begin{example}
Let $\mathbf{D}=\{D_n\}_{n\geq 0}$ be a family of discrete
hypergraphs. Then $\mathcal{H}(\mathbf{D})$ is isomorphic to the
polynomial Hopf algebra $\mathbf{k}[x]$, where $x=D_1$. The
character $\zeta_1=\zeta|_{\mathcal{H}(\mathbf{D})}$ is defined by
$\zeta_1(p)=p(1), p\in\mathbf{k}[x]$.
\end{example}

\begin{example}
Let $\mathbf{G}$ be the class of simple graphs considered as
$2-$uniform hypergraphs. Then
$(\mathcal{H}(\mathbf{G}),\zeta_\mathbf{G})$ is the chromatic Hopf
algebra of graphs.
\end{example}

We distinguish two important classes of hypergraphs that are
closed under restrictions and disjoint unions.

\paragraph{Clutters}

A {\it clutter} $C$ on the finite set $V$ is a hypergraph which is
an antichain in the boolean poset $\mathcal{P}(V)$ of all subsets
of $V$. For a hypergraph $H$ denote by $C(H)$ the clutter of
minimal edges of $H$. The clutters are naturally identified with
simplicial complexes. A simplicial complex $K$ defines the clutter
of its maximal faces. The family of all isomorphism classes of
clutters $\mathbf{C}$ defines the combinatorial Hopf algebra of
clutters $(\mathcal{H}(\mathbf{C}),\zeta_\mathbf{C})$. The obvious
inclusions are monomorphisms of CHA
\[(\mathcal{H}(\mathbf{D}),\zeta_\mathbf{D})\subset(\mathcal{H}(\mathbf{G}),\zeta_\mathbf{G})\subset(\mathcal{H}(\mathbf{C}),\zeta_\mathbf{C}).\]

\paragraph{Building sets}

A collection $B$ of non-empty subsets of the vertex set $V$ is a
{\it building set} on $V$ if it satisfies

$\diamond$ If $e, e'\in B$ and $e\cap e'\neq\emptyset$ then $e\cup
e'\in B$

$\diamond$ $\{v\}\in B$ for all $v\in V$.

\noindent We identify a building set $B$ with the hypergraph
$B\setminus\{\{v\} \ | \ v\in V\}$. The family of all isomorphism
classes of building sets $\mathbf{B}$ defines the combinatorial
Hopf algebra of building sets
$(\mathcal{H}(\mathbf{B}),\zeta_\mathbf{B})$. This algebra is
studied in \cite{GS}.

Let $p:\mathcal{H}(\mathbf{H})\rightarrow\mathcal{H}(\mathbf{C})$
be the projection map which assigns the clutter $C(H)$ of minimal
edges to a hypergraph $H$. Since $C(H_1\sqcup H_2)=C(H_1)\sqcup
C(H_2)$ for $H_1, H_2\in\mathbf{H}$ and $C(H|_I)=C(H)|_I$ for
$H\in\mathbf{H}$ and any $I\subset V(H)$, the map $p$ is a Hopf
algebra epimorphism. It holds that $p\circ
i=1_{\mathcal{H}(\mathbf{C})}$, where
$i:\mathcal{H}(\mathbf{C})\rightarrow\mathcal{H}(\mathbf{H})$ is
the inclusion map. Thus $i$ is a splitting monomorphism.

The odd subalgebra of hypergraphs $S_-(\mathcal{H}(\mathbf{H}),
\zeta_\mathbf{H})$ is determined by the generalized
Dehn-Sommerville relations $(\ref{gdsr})$. Since the inclusion
$i:(\mathcal{H}(\mathbf{C}),\zeta_\mathbf{C})\rightarrow(\mathcal{H}(\mathbf{H}),\zeta_\mathbf{H})$
is a morphism of CHA's, we have
$S_-(\mathcal{H}(\mathbf{C}),\zeta_\mathbf{C})=S_-(\mathcal{H}(\mathbf{H}),\zeta_\mathbf{H})\cap\mathcal{H}(\mathbf{C})$.

\begin{definition}
A hypergraph $H$ is eulerian if
$\chi_\mathbf{H}(H|_I)=\epsilon(H|_I)$ for all $I\subset V(H)$,
where
$\chi_\mathbf{H}=\overline{\zeta}_\mathbf{H}\zeta_\mathbf{H}$ is
the Euler character of CHA
$(\mathcal{H}(\mathbf{H}),\zeta_\mathbf{H})$.
\end{definition}

The following characterization is just a reformulation of the
definition.

\begin{lemma}\label{eulerII}
A hypergraph $H\neq H_\emptyset$ is eulerian if and only if either
$H|_I$ is descrete or $\zeta_\mathbf{H}^{-1}(H|_I)=0,$ for all
$\emptyset\neq I\subset V(H)$.
\end{lemma}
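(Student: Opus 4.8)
The plan is to reduce the eulerian condition to a statement about the M\"obius character $\zeta_\mathbf{H}^{-1}$ by comparing the two convolution identities $\chi_\mathbf{H}=\overline{\zeta}_\mathbf{H}\zeta_\mathbf{H}$ and $\epsilon=\zeta_\mathbf{H}^{-1}\zeta_\mathbf{H}$. First I would record two elementary facts. Since every nonempty restriction $H|_I$ is distinct from the unit $H_\emptyset$, we have $\epsilon(H|_I)=0$, so the defining equality $\chi_\mathbf{H}(H|_I)=\epsilon(H|_I)$ becomes simply $\chi_\mathbf{H}(H|_I)=0$ for all $\emptyset\neq I\subseteq V(H)$, the case $I=\emptyset$ being automatic. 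Second, on a discrete hypergraph $D_k$ on $k$ vertices the chromatic polynomial is $\chi(D_k,m)=m^k$, whence $\zeta_\mathbf{H}^{-1}(D_k)=\chi(D_k,-1)=(-1)^k=\overline{\zeta}_\mathbf{H}(D_k)$ by $(\ref{-1})$; thus $\overline{\zeta}_\mathbf{H}$ and $\zeta_\mathbf{H}^{-1}$ agree on discrete hypergraphs, whereas $\overline{\zeta}_\mathbf{H}(G)=(-1)^{|V(G)|}\zeta_\mathbf{H}(G)=0$ whenever $G$ is not discrete.

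Next I would derive a master identity. Applying the coproduct to $\chi_\mathbf{H}=\overline{\zeta}_\mathbf{H}\zeta_\mathbf{H}$ and to $\epsilon=\zeta_\mathbf{H}^{-1}\zeta_\mathbf{H}$ and subtracting, for any hypergraph $G$ with at least one vertex one obtains
\[\chi_\mathbf{H}(G)=\sum_{K\subseteq V(G)}\bigl(\overline{\zeta}_\mathbf{H}(G|_K)-\zeta_\mathbf{H}^{-1}(G|_K)\bigr)\,\zeta_\mathbf{H}(G|_{V(G)\setminus K}).\]
Only subsets $K$ with $G|_{V(G)\setminus K}$ discrete contribute, and by the facts above the bracket vanishes when $G|_K$ is discrete and equals $-\zeta_\mathbf{H}^{-1}(G|_K)$ when it is not. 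Specializing $G=H|_J$ and using $(H|_J)|_K=H|_K$ yields
\[\chi_\mathbf{H}(H|_J)=-\sum_{K}\zeta_\mathbf{H}^{-1}(H|_K),\]
where $K$ runs over the nonempty subsets of $J$ for which $H|_K$ is not discrete and $H|_{J\setminus K}$ is discrete.

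From this identity both implications follow. For the backward direction, assume that for every nonempty $I$ either $H|_I$ is discrete or $\zeta_\mathbf{H}^{-1}(H|_I)=0$. Then in the displayed sum for any nonempty $J$ each index $K$ has $H|_K$ non-discrete, hence $\zeta_\mathbf{H}^{-1}(H|_K)=0$, so $\chi_\mathbf{H}(H|_J)=0$ and $H$ is eulerian. For the forward direction, assume $H$ is eulerian and argue by induction on $|J|$ that every nonempty non-discrete $H|_J$ satisfies $\zeta_\mathbf{H}^{-1}(H|_J)=0$. The index $K=J$ always occurs in the sum, since its complement $H|_\emptyset=H_\emptyset$ is discrete, and it contributes $-\zeta_\mathbf{H}^{-1}(H|_J)$; every other index $K\subsetneq J$ has $H|_K$ non-discrete of strictly smaller size, so $\zeta_\mathbf{H}^{-1}(H|_K)=0$ by the induction hypothesis. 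Hence $0=\chi_\mathbf{H}(H|_J)=-\zeta_\mathbf{H}^{-1}(H|_J)$, as required.

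The main obstacle is not the inductive inversion, which is routine once the identity is in hand, but setting up the master identity correctly: one must notice that comparing $\chi_\mathbf{H}$ with $\epsilon$ collapses precisely to a sum over restrictions whose complement is discrete, and that on this index set $\overline{\zeta}_\mathbf{H}$ may be traded for $-\zeta_\mathbf{H}^{-1}$ using the coincidence of the two characters on discrete hypergraphs together with the vanishing of $\overline{\zeta}_\mathbf{H}$ off the discrete locus. Verifying the small computation $\zeta_\mathbf{H}^{-1}(D_k)=(-1)^k$ is the only place an explicit value is needed.
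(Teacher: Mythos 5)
Your proof is correct and rests on the same two ingredients as the paper's: the comparison of $\chi_\mathbf{H}=\overline{\zeta}_\mathbf{H}\zeta_\mathbf{H}$ with $\epsilon=\zeta_\mathbf{H}^{-1}\zeta_\mathbf{H}$, and the computation $\zeta_\mathbf{H}^{-1}(D_n)=(-1)^n=\overline{\zeta}_\mathbf{H}(D_n)$ together with the vanishing of $\overline{\zeta}_\mathbf{H}$ on non-discrete hypergraphs. The only difference is one of rigor: the paper simply asserts that the eulerian condition is equivalent to $\zeta_\mathbf{H}^{-1}=\overline{\zeta}_\mathbf{H}$ on all restrictions, whereas your convolution identity and induction on $|J|$ supply the justification for that step.
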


\begin{proof}
The condition $\chi_\mathbf{H}(H)=\epsilon(H)$ is equivalent to
$\zeta_\mathbf{H}^{-1}(H)=\overline{\zeta}_\mathbf{H}(H)$. Let
$\chi(D_n,m)=m^n$ be the chromatic polynomial of the discrete
hypergraph $D_n$ on $n$ vertices. By $(\ref{-1})$ we have
$\zeta^{-1}(D_n)=\chi(D_n,-1)=(-1)^n=\overline{\zeta}(D_n)$. As
$\overline{\zeta}_\mathbf{H}(H)=0$ for the nondiscrete $H$, the
lemma follows.

\end{proof}
Disjoint sums and restrictions of eulerian hypergraphs are
eulerian as well, so the family of all eulerian hypergraphs form
the Hopf subalgebra $\mathcal{E}(\mathbf{H})$ of CHA
$\mathcal{H}(\mathbf{H})$,  which we call the {\it eulerian
subalgebra} of hypergraphs.

\begin{proposition}\label{prop}
The eulerian subalgebra $\mathcal{E}(\mathbf{H})$ is a subalgebra
of the odd algebra
$S_-(\mathcal{H}(\mathbf{H}),\zeta_\mathbf{H})$. The coefficients
$(\zeta_\mathbf{H})_\alpha(H), \alpha\models|V(H)|$ of an eulerian
hypergraph $H\in\mathcal{E}(\mathbf{H})$ satisfy relations
$(\ref{relation})$.
\end{proposition}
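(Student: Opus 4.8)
The plan is to prove the two assertions separately, each by a direct application of the generalized Dehn-Sommerville relations $(\ref{gdsr})$ to the explicit coproduct of hypergraphs. For the first assertion I would verify $(\ref{gdsr})$ for an arbitrary eulerian hypergraph $H$; since those relations are necessary and sufficient for membership in the odd subalgebra, this yields $H\in S_-(\mathcal{H}(\mathbf{H}),\zeta_\mathbf{H})$. I would begin by writing out the iterated coproduct
\[\Delta^{(2)}(H)=\sum_{I\sqcup J\sqcup K=V(H)}H|_I\otimes H|_J\otimes H|_K,\]
where the sum runs over all ordered decompositions of $V(H)$ into three (possibly empty) blocks.

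Applying the operator $id\otimes(\overline{\zeta}_\mathbf{H}-\zeta_\mathbf{H}^{-1})\otimes id$ then replaces each middle factor $H|_J$ by the scalar $(\overline{\zeta}_\mathbf{H}-\zeta_\mathbf{H}^{-1})(H|_J)$, so the left-hand side of $(\ref{gdsr})$ becomes
\[\sum_{I\sqcup J\sqcup K=V(H)}(\overline{\zeta}_\mathbf{H}-\zeta_\mathbf{H}^{-1})(H|_J)\,(H|_I\otimes H|_K).\]
The crux is to argue that every scalar coefficient vanishes. Because restrictions of an eulerian hypergraph are again eulerian, each $H|_J$ is eulerian, and by Lemma \ref{eulerII} --- specifically the equivalence $\chi_\mathbf{H}(H|_J)=\epsilon(H|_J)\Leftrightarrow\zeta_\mathbf{H}^{-1}(H|_J)=\overline{\zeta}_\mathbf{H}(H|_J)$ established in its proof --- this forces $(\overline{\zeta}_\mathbf{H}-\zeta_\mathbf{H}^{-1})(H|_J)=0$ for every block $J$. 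The degenerate block $J=\emptyset$ is harmless since $\overline{\zeta}_\mathbf{H}(H_\emptyset)=\zeta_\mathbf{H}^{-1}(H_\emptyset)=1$. Hence the entire sum is zero, $H$ satisfies $(\ref{gdsr})$, and $H\in S_-(\mathcal{H}(\mathbf{H}),\zeta_\mathbf{H})$. Since $\mathcal{E}(\mathbf{H})$ is spanned by eulerian hypergraphs and $S_-$ is a graded Hopf subalgebra, I would conclude $\mathcal{E}(\mathbf{H})\subseteq S_-(\mathcal{H}(\mathbf{H}),\zeta_\mathbf{H})$.

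The second assertion would then follow with no further work: once $H$ is known to lie in the odd subalgebra, its coefficients $(\zeta_\mathbf{H})_\alpha(H)$ automatically satisfy $(\ref{relation})$, because those relations were derived in the preliminaries for every homogeneous element of $S_-(\mathcal{H},\zeta)$ from the canonical morphism $(\ref{canonical})$ together with the Dehn-Sommerville relations for quasisymmetric functions. I do not expect a genuine obstacle here, as the proposition is a direct specialization of the abstract machinery; the only points requiring care are the correct bookkeeping of the triple coproduct $\Delta^{(2)}$ as ordered set decompositions, and the realization that the eulerian hypothesis must be invoked for every restriction $H|_J$ rather than for $H$ alone --- which is exactly what closure of the eulerian class under restriction provides.
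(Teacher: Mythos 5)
Your proposal is correct and follows essentially the same route as the paper: expand $\Delta^{(2)}(H)$ over ordered decompositions $I\sqcup J\sqcup K=V(H)$, observe that every middle scalar $(\overline{\zeta}_\mathbf{H}-\zeta_\mathbf{H}^{-1})(H|_J)$ vanishes because restrictions of an eulerian hypergraph are eulerian (Lemma \ref{eulerII}), and deduce the second assertion from the general derivation of $(\ref{relation})$ for elements of the odd subalgebra. The only cosmetic difference is that the paper regroups the sum as $\sum_{J}(\zeta^{-1}-\overline{\zeta})(H|_J)\,\Delta(H|_{J^c})$, which changes nothing in substance.
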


\begin{proof}

The odd subalgebra $S_-(\mathcal{H}(\mathbf{H}),\zeta_\mathbf{H})$
is characterized by the generalized Dehn-Sommerville relations
$(\ref{gdsr})$
\[(id\otimes(\zeta^{-1}-\overline{\zeta})\otimes
id)\circ\Delta^{(2)}(H)=\sum_{I\sqcup J\sqcup
K=V(H)}H|_I\otimes(\zeta^{-1}-\overline{\zeta})(H|_J)\otimes
H|_K=\] $\sum_{J\subset
V(H)}(\zeta^{-1}-\overline{\zeta})(H|_J)\Delta(H|_{J^{c}})=0.$ It
follows that $H\in S_-(\mathcal{H}(\mathbf{H}),\zeta_\mathbf{H})$
if $(\zeta^{-1}-\overline{\zeta})(H|_J)=0$ for all $J\subset X$,
which is equivalent to $H\in\mathcal{E}(\mathbf{H})$ by lemma
$\ref{eulerII}$. The relations $(\ref{gdsr})$ imply
$(\ref{relation})$.

\end{proof}

\noindent Let
$\mathcal{E}(\mathbf{C})=\mathcal{E}(\mathbf{H})\cap\mathcal{H}(\mathbf{C})$
be the eulerian subalgebra of clutters.

\begin{proposition}\label{hc}
A hypergraph $H$ is eulerian if and only if the clutter $C(H)$ is
eulerian.
\end{proposition}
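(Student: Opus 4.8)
The plan is to exploit the projection $p:\mathcal{H}(\mathbf{H})\to\mathcal{H}(\mathbf{C})$ and to show that it is in fact a morphism of combinatorial Hopf algebras, so that every character entering the definition of eulerianness is transported along $p$. First I would observe that $\zeta_\mathbf{H}=\zeta_\mathbf{C}\circ p$: a hypergraph $H$ is discrete precisely when it has no edges, and this happens precisely when its clutter of minimal edges $C(H)$ has no edges, i.e.\ is discrete. Since $p$ was already shown to be a Hopf algebra epimorphism, this character compatibility upgrades $p$ to a morphism of CHA.

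Next I would record the consequences of $p$ being a CHA morphism. Because $p$ commutes with the antipode, the formula $\zeta^{-1}=\zeta\circ S$ yields $\zeta_\mathbf{H}^{-1}=\zeta_\mathbf{C}^{-1}\circ p$; because $p$ preserves the grading (the number of vertices), $\overline{\zeta}_\mathbf{H}=\overline{\zeta}_\mathbf{C}\circ p$, and hence the Euler characters satisfy $\chi_\mathbf{H}=\chi_\mathbf{C}\circ p$. The combinatorial input I would use is the already established identity $p(H|_I)=C(H|_I)=C(H)|_I$ for every $I\subset V(H)$, together with the fact that $C(H)$ carries the same vertex set as $H$, so that the induced subclutters of $C(H)$ are exactly the clutters $C(H)|_I$.

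With these in hand the proof reduces to matching the two characterizations supplied by Lemma \ref{eulerII}. By that lemma, $H$ is eulerian iff for every nonempty $I\subset V(H)$ either $H|_I$ is discrete or $\zeta_\mathbf{H}^{-1}(H|_I)=0$, and $C(H)$ is eulerian iff for every nonempty $I$ either $C(H)|_I$ is discrete or $\zeta_\mathbf{C}^{-1}(C(H)|_I)=0$. I would then check that the two alternatives agree term by term: $H|_I$ is discrete exactly when $C(H)|_I=C(H|_I)$ is discrete, while $\zeta_\mathbf{H}^{-1}(H|_I)=\zeta_\mathbf{C}^{-1}(p(H|_I))=\zeta_\mathbf{C}^{-1}(C(H)|_I)$. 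Hence the defining condition for $H$ holds for all $I$ iff the defining condition for $C(H)$ holds for all $I$, which is the claimed equivalence.

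I do not expect a serious obstacle: the statement is essentially formal once $p$ is recognized as a CHA morphism. The only point requiring care is the bookkeeping of vertex sets, namely that $C(H)$ really retains the full vertex set $V(H)$ (isolated vertices included), so that the subsets $I$ indexing restrictions of $H$ are in bijection with those indexing restrictions of $C(H)$; this is what guarantees that the quantifiers \emph{for all $I$} on the two sides range over the same index set.
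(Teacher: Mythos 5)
Your proposal is correct and follows essentially the same route as the paper: the paper's proof consists precisely of the observation that $\zeta_\mathbf{H}=\zeta_\mathbf{C}\circ p$ makes $p$ a morphism of CHA's, whence $\chi_\mathbf{H}(H)=\chi_\mathbf{C}(C(H))$ and the equivalence follows by applying this to all restrictions $H|_I$, using $C(H|_I)=C(H)|_I$. You merely spell out the intermediate steps (compatibility with the antipode, the conjugate character, and the bookkeeping of vertex sets) that the paper leaves implicit.
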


\begin{proof}

The proposition follows from the identity
$\chi_\mathbf{H}(H)=\chi_\mathbf{C}(C(H)), H\in\mathbf{H},$ which
is a consequence of the fact that the projection $p$ is a morphism
of CHA's, i.e. $\zeta_\mathbf{H}=\zeta_\mathbf{C}\circ p$.

\end{proof}

The main computational tool is the {\it deletion-contraction
property}. The chromatic symmetric function of a hypergraph $H$
depends only on the clutter of minimal edges, i.e.
$\Psi(H)=\Psi(C(H))$. Therefore $\chi(H,m)=\chi(C(H),m)$ for
chromatic polynomials. For a clutter $C$ on the vertices $V$ and
an edge $e\in C$ define $C-e:=C\setminus\{e\}$ to be {\it
deletion} of $e$ from $C$ and $C/e=\{e'/e | e'\in C-e\}$ to be
{\it contraction} of $C$ by $e$, where $C/e$ is the clutter on
$V\setminus e\cup\{e\}$ and $e'/e=\left\{\begin{array}{cc} e', & \
e'\cap
e=\emptyset \\
e'\setminus e\cup\{e\}, & e'\cap e\neq\emptyset
\end{array}\right..$ Then
\[\chi(C,m)=\chi(C-e,m)-\chi(C/e,m),\] and specially for $m=-1$ we obtain by $(\ref{-1})$
\begin{equation}\label{dc}
\zeta^{-1}(C)=\zeta^{-1}(C-e)-\zeta^{-1}(C/e). \end{equation}

To a clutter $C$ is associated the abstract simplicial complex on
the vertex set $C$, called the {\it nerve} $N(C)=\{C'\subset C |
\cap C'\neq\emptyset\}.$ The {\it intersection graph} $G(C)$ of a
clutter is the $1$-skeleton $N(C)^{(1)}$ of the nerve. We say that
$C$ is an {\it odd clutter} if $\cap C'$ is an odd element set for
all faces $C'\in N(C)$ of the nerve. The wide class of eulerian
clutters is identified by the following theorem which is proved
for building sets in \cite{GS}.

\begin{theorem}{\cite[Proposition 8.10]{GS}}\label{clique}
Let $C$ be an odd clutter such that
\begin{itemize}
\item[(i)] The graph $G(C)$ is a chordal graph \item[(ii)] The
nerve $N(C)$ is the clique complex of $G(C)$.
\end{itemize}
Then $C$ is an eulerian clutter.
\end{theorem}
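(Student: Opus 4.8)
The plan is to work entirely inside the Hopf algebra of clutters and to prove, by induction on the pair $(|V(C)|,|C|)$ ordered lexicographically, that $\zeta^{-1}(C|_I)=0$ whenever $C|_I$ is not discrete. By Lemma \ref{eulerII} this is precisely the assertion that $C$ is eulerian, and by Proposition \ref{hc} it also settles the hypergraph version. The first observation is that the three hypotheses are inherited by every restriction $C|_I$: its intersection graph is the induced subgraph of $G(C)$ on the edges contained in $I$, so it stays chordal; a family of edges of $C|_I$ that pairwise intersect already pairwise intersects in $C$, hence has nonempty total intersection, so $N(C|_I)$ remains the clique complex of $G(C|_I)$; and the surviving intersections are the old ones, so the odd-clutter property persists. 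Consequently it suffices to prove $\zeta^{-1}(C)=0$ for a non-discrete $C$ satisfying (i) and (ii), with the induction hypothesis supplying $\zeta^{-1}(C')=0$ for every strictly smaller clutter that still satisfies the hypotheses and is not discrete. Isolated vertices are harmless, since $\zeta^{-1}$ is an algebra map, so a disjoint discrete factor contributes only a unit $(-1)^{(\cdot)}$ and we may assume every vertex lies in some edge.

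The engine is the deletion–contraction identity (\ref{dc}), $\zeta^{-1}(C)=\zeta^{-1}(C-e)-\zeta^{-1}(C/e)$, applied to a \emph{simplicial vertex} $e$ of the chordal graph $G(C)$, that is, an edge $e\in C$ whose neighbours in $G(C)$ pairwise intersect. The deletion $C-e$ has the same vertex set but one fewer edge, and it manifestly inherits (i) and (ii), since deleting a vertex of $G(C)$ keeps it chordal, keeps the nerve a clique complex, and changes no intersection; hence $\zeta^{-1}(C-e)=0$ by induction, unless $C-e$ is discrete, in which case $C=\{e\}$ is a single odd edge and the direct computation $\chi(\{e\},-1)=(-1)^{|e|}-(-1)=0$ closes the base case. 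It remains to kill $\zeta^{-1}(C/e)$, and here the simplicial choice is crucial. Each neighbour $f$ of $e$ becomes $f'=(f\setminus e)\cup\{\star\}$, all sharing the contracted vertex $\star$, while an edge $g$ disjoint from $e$ is unchanged and satisfies $g\cap f'=g\cap f$. The key parity computation is that for any family $f_1,\dots,f_r$ of neighbours of $e$ one has $\bigcap_i f_i'=\big((\bigcap_i f_i)\setminus e\big)\cup\{\star\}$, whose cardinality equals $|\bigcap_i f_i|-|e\cap\bigcap_i f_i|+1$; because $\{f_1,\dots,f_r\}$ and $\{e,f_1,\dots,f_r\}$ are both cliques of $G(C)$, hence faces of $N(C)$ by (ii), both $|\bigcap_i f_i|$ and $|e\cap\bigcap_i f_i|$ are odd, so the new intersection is again odd. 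Thus contraction preserves the odd-clutter property on the edges meeting $e$, and preserves it trivially on the rest.

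The main obstacle is to upgrade this parity bookkeeping to the full statement that $C/e$, after discarding non-minimal edges so as to be a clutter, again satisfies (i) and (ii), whereupon the induction hypothesis yields $\zeta^{-1}(C/e)=0$ in the non-discrete case; the degenerate cases, where $e$ has no neighbour or $\star$ becomes isolated, leave a disjoint discrete factor and again give $\zeta^{-1}(C/e)=0$. Concretely I must control what happens when two contracted edges $f_1',f_2'$ become comparable, so that one is dropped, or equal, so that two vertices of $G(C)$ are identified: dropping an edge is an induced-subgraph operation on $G(C/e)$ and is safe, but an identification is not, and a priori could destroy chordality. I would show, using the clique-complex hypothesis, that such an identification forces $f_1$ and $f_2$ to differ only inside $e$, and then argue that the resulting graph remains chordal with nerve equal to its clique complex. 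Verifying that $G(C/e)$ is chordal and that $N(C/e)$ is exactly its clique complex is the delicate point; everything else is the parity accounting above together with the deletion–contraction recursion.
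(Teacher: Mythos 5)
Your strategy is essentially the paper's: induct, kill $\zeta^{-1}(C-e)$ because the hypotheses are hereditary under deletion, and reduce to $\zeta^{-1}(C/e)=0$ via the deletion--contraction identity (\ref{dc}). Your parity computation showing that contraction preserves the odd-clutter property is correct, as is the treatment of the base case and of restrictions. But the proposal does not close the inductive step: you explicitly defer the verification that $C/e$, viewed as a clutter, again satisfies (i) and (ii), calling it ``the delicate point'' and describing what you ``would show'' rather than showing it. Without that verification the induction hypothesis cannot be applied to $C/e$, and nothing else in the argument controls $\zeta^{-1}(C/e)$; so as written this is a genuine gap, and it sits exactly where the difficulty of the theorem lies. (The paper's own proof is also terse here: it contracts an arbitrary edge $e_0$, records $N(C/e_0)=N(C-e_0)\cup\sigma$ for the simplex $\sigma$ on the neighbours of $e_0$, and asserts that $C/e_0$ inherits the hypotheses, which for an arbitrary $e_0$ requires the fact that vertex elimination preserves chordality.)

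The irony is that your extra idea --- contracting a \emph{simplicial} vertex $e$ of the chordal graph $G(C)$ --- is precisely what makes the deferred step short, so you stopped one computation from the end. Since the neighbours of $e$ pairwise intersect in $C$, for any $f_1,f_2\in C-e$ one has $f_1'\cap f_2'\neq\emptyset$ if and only if $f_1\cap f_2\neq\emptyset$ (two neighbours of $e$ share $\star$ and already met in $C$, while a non-neighbour $g$ satisfies $g\cap f'=g\cap f$). Hence $f\mapsto f'$ identifies $G(C/e)$ with $G(C)-e$, which is chordal. A clique $Q'$ of $G(C/e)$ then comes from a clique $Q$ of $G(C)$, hence a face of $N(C)$ by (ii); and $\bigcap_{f\in Q}f'$ contains $\star$ if every $f\in Q$ meets $e$, and otherwise contains $\bigcap Q\neq\emptyset$ because some $g\in Q$ is disjoint from $e$, which forces $\bigl(\bigcap Q\bigr)\cap e=\emptyset$. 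So $N(C/e)$ is the clique complex of $G(C/e)$, and your parity argument gives oddness. Finally, if two contracted edges become equal or comparable, passing to the clutter of minimal edges of $C/e$ amounts to deleting vertices of $G(C/e)$, i.e.\ to taking an induced subgraph and a full subcomplex, which preserves (i), (ii) and oddness and does not change $\zeta^{-1}$ since the chromatic invariants depend only on minimal edges. With these lines added your argument is complete and is a slightly sharper variant of the paper's proof.
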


\begin{proof}
We may suppose that $\cup C=V$. For $\{V\}$ we have
$\zeta^{-1}(\{V\})=(-1)^{|V|}+1$ by deletion-contraction, so
$\{V\}$ is eulerian if and only if $|V|$ is odd. Suppose the
statement is true for all clutters with $n>1$ edges and let
$C=\{e_0,e_1,\ldots,e_n\}$ be a clutter with $n+1$ edges. Since
the property of being a clique complex of a chordal graph is
hereditary on full subcomplexes it is sufficient to prove that
$\zeta^{-1}(C)=0$. But
\[\zeta^{-1}(C)=-\zeta^{-1}(C/e_0),\] by deletion-contraction since
$\zeta^{-1}(C-e_0)=0$ by induction. Let $\sigma$ be the simplex on
the set $\{e\in C-e_0 | e\cap e_0\neq\emptyset\}$. We have

\[N(C/e_0)=N(C-e_0)\cup\sigma.\] The proof follows from the fact
that $C/e_0$ satisfies conditions of theorem.
\end{proof}

The converse of Theorem \ref{clique} is not true in general. The
following example is due to N. Erokhovets.

\begin{example}
Let $C=\{\{1,2,3,6,7\},\{3,4,5,7,8\},\{1,2,3,4,5\}\}$ or, more
generally, $C=\{e_1,e_2,e_3\}$ such that $|e_i|$-odd for all $i$,
$e_3\varsubsetneq e_1\cup e_2$, $|e_1\cap e_2|$-even, $|e_1\cap
e_2\cap e_3|, |e_1\cap e_3|, |e_2\cap e_3|$-odd. $C$ is not odd
since $|e_1\cap e_2|$-even. For any proper set $I\varsubsetneq V$
we have $C|_I$ is either discrete, or it consists of one odd set,
or it consists of two odd sets with odd intersection. So $C$ is
eulerian if and only if $\zeta^{-1}(C)=0$. We have
\[
\zeta^{-1}(C)=\zeta^{-1}(C-e_3)-\zeta^{-1}(C/e_3)=0,\]since both
deletion and contraction is a clutter of two odd sets with even
intersection.
\end{example}

The following examples show that even conditions $(i)$ and $(ii)$
of Theorem \ref{clique} have not to be satisfied by eulerian
clutters.

\begin{example}
Let $C_1=\{e_1,e_2,e_3,e_4\}$, where $e_1=\{1,2,3,4,5\},
e_2=\{5,6,7,8,9\}, e_3=\{3,4,5,6,7\}$ and $e_4=\{1,9,10\}$ be a
clutter on $10$ vertices. The nerve $N(C_1)$ is not a flag complex
since $\{e_1,e_2,e_4\}$ is a minimal nonface. By simple inspection
we see that $C_1$ is eulerian iff $\zeta^{-1}(C_1)=0$, which is
satisfied, since
\[\zeta^{-1}(C_1)=\zeta^{-1}(C-e_3)-\zeta^{-1}(C/e_3)=0.\]
Similarly, let $C_2=\{e_1,e_2,e_3,e_5,e_6\}$, where
$e_5=\{1,10,11\}$ and $e_6=\{9,11,12\}$, be a clutter on $12$
vertices. Then it is easy to see that $C_2$ is eulerian, but
$G(C_2)$ is not a chordal graph.
\end{example}

Nevertheless the converse of Theorem \ref{clique} holds for an
eulerian clutters with an additional condition.

\begin{theorem}\label{converse}
Let $C$ be an eulerian clutter such that
\[(\ast) \ \ \ \ \   e\varsubsetneq\cup (C-e), \ \mbox{for all} \ e\in C.\]
Then $C$ is odd and the nerve $N(C)$ is the clique complex of the
chordal graph $G(C)$.
\end{theorem}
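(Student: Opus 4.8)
The plan is to bootstrap from the single-edge case and then to climb the nerve by strong induction on the number of edges, using at every step the deletion--contraction identity $(\ref{dc})$ together with the fact (noted after Lemma~\ref{eulerII}) that every restriction of an eulerian clutter is again eulerian, so that $\zeta^{-1}(C|_I)=0$ for every non-discrete $I\subseteq V$. We may assume $\cup C=V$. First I would settle that each edge is odd: since $C$ is an antichain, $C|_e=\{e\}$ for every $e\in C$, so $C|_e$ is non-discrete and Lemma~\ref{eulerII} forces $\zeta^{-1}(\{e\})=(-1)^{|e|}+1=0$, i.e.\ $|e|$ is odd. This settles the $0$-faces of $N(C)$, and for two odd edges the two--edge evaluation $\zeta^{-1}(\{e,f\})=-\bigl((-1)^{|e\cap f|}+1\bigr)$ (computed from $(\ref{dc})$) shows $\zeta^{-1}(\{e,f\})=0$ \emph{iff} $|e\cap f|$ is odd.

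Next I would establish oddness of the pairwise --- and, inductively, of all --- intersections together with the flag property of $N(C)$. For a subfamily $\mathcal F=\{e_{i_1},\dots,e_{i_k}\}$ I want to realize it as a restriction by choosing $I$ with $\cup\mathcal F\subseteq I$ and $g\not\subseteq I$ for every $g\in C\setminus\mathcal F$; this is possible exactly when no other edge is contained in $\cup\mathcal F$. In that \emph{isolatable} case $C|_{\cup\mathcal F}=\mathcal F$, and a direct deletion--contraction evaluation of $\zeta^{-1}(\mathcal F)$ --- the formula above for a pair, and a non-zero value for a pairwise--intersecting triple with empty common intersection (a short computation gives $\zeta^{-1}=2$ for three $3$-element edges meeting pairwise in one point) --- combined with $\zeta^{-1}(C|_{\cup\mathcal F})=0$ yields both that $|\bigcap\mathcal F|$ is odd and that pairwise intersection forces $\bigcap\mathcal F\neq\emptyset$. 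Thus the Helly/flag property and oddness follow for every isolatable family.

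The remaining, and genuinely hard, case is when some edge $g\in C\setminus\mathcal F$ satisfies $g\subsetneq\cup\mathcal F$, which blocks the restriction; this is exactly the obstruction exhibited by the Erokhovets example, and exactly where hypothesis $(\ast)$ must enter. Because $(\ast)$ forbids private vertices, contraction introduces no spurious incidences, and the identity $N(C/g)=N(C-g)\cup\sigma$ used in the proof of Theorem~\ref{clique} lets me descend to a clutter with fewer edges by contracting such an interior edge, relating $\zeta^{-1}(C)$, $\zeta^{-1}(C-g)$ and $\zeta^{-1}(C/g)$ through $(\ref{dc})$ and feeding the result into the induction hypothesis. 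Chordality of $G(C)$ would then follow by the same isolate--or--contract scheme: an induced cycle $e_1,\dots,e_\ell$ ($\ell\ge 4$) of odd edges with odd consecutive intersections and disjoint non--consecutive edges restricts to a cycle clutter whose $\zeta^{-1}$ is non-zero (the hypergraph analogue of $\chi(C_\ell,-1)\neq 0$ for graph cycles), contradicting Lemma~\ref{eulerII}. I expect the crux to be precisely these non--isolatable configurations: making the contraction step \emph{faithful} --- guaranteeing that the parity and incidence data of $\mathcal F$ are read off correctly from $C/g$, and that $(\ast)$ is genuinely what keeps the reduced clutter inside the inductive framework --- is the obstacle on which the whole argument turns.
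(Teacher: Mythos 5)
Your local computations are the right ones and are essentially the paper's: each edge must have odd cardinality (your $C|_e=\{e\}$ observation), a minimal subfamily $C'$ with even common intersection has $\zeta^{-1}(C')=(-1)^{|C'|-1}2$, a minimal nonface of size $n>2$ contracts to such a family and gives $\pm 2$, and a chordless $n$-cycle gives $(-1)^{n-1}2$; in each case Lemma~\ref{eulerII} applied to the restriction $C|_{\cup\mathcal F}$ produces the contradiction. (One small caveat: your two-edge formula $\zeta^{-1}(\{e,f\})=-\bigl((-1)^{|e\cap f|}+1\bigr)$ requires $e\cap f\neq\emptyset$; for disjoint odd edges $\zeta^{-1}=0$ by multiplicativity even though $|e\cap f|=0$ is even.)

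The genuine gap is in how you read and use $(\ast)$. As the paper's proof makes explicit in its first line, $(\ast)$ is to be understood as \emph{equivalent} to the statement that every proper subclutter $C'\varsubsetneq C$ is a proper restriction of $C$: each edge $e$ has a vertex not covered by the remaining edges, hence no edge of $C\setminus\mathcal F$ can lie inside $\cup\mathcal F$, and $C|_{\cup\mathcal F}=\mathcal F$ for every subfamily $\mathcal F$. In other words, under $(\ast)$ \emph{every} subfamily is ``isolatable'' in your sense, and the case you single out as ``the remaining, and genuinely hard, case'' is vacuous --- that is the entire role of the hypothesis, and it is exactly what the Erokhovets example violates. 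You instead read $(\ast)$ as \emph{forbidding} private vertices, which is the opposite of what is needed (and is incompatible with the equivalence the proof relies on), and on that reading you do not actually carry out the contraction-based repair you sketch: you only state that you ``expect the crux'' to lie in making the contraction step faithful. As written, the decisive step is therefore missing and the proof is incomplete. Once you replace your reading of $(\ast)$ by the equivalence above, the non-isolatable case disappears, your three computations suffice, and the argument coincides with the paper's proof of Theorem~\ref{converse} (which handles the even minimal face, the minimal nonface via $N(C'/e_1)$, and the chordless cycle, exactly as you do in the isolatable case).
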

\begin{proof}
Note that $(\ast)$ is equivalent to the condition that any
subclutter $C'\varsubsetneq C$ is a proper restriction of $C$,
i.e. for the sets of vertices holds $V(C')\varsubsetneq V(C)$.

Let $C'\subset C$ be a minimal face of $N(C)$ such that $\cap C'$
is even. Then by induction and deletion-contraction follows
$\zeta^{-1}(C')=(-1)^{|C'|-1}2$. We have that either $C'=C$ or
$C'\varsubsetneq C$. In both cases $C$ is not eulerian.

\begin{itemize}

\item[Case 1:] Suppose $C'=\{e_1,\ldots,e_n\}$ is a minimal
nonface of $N(C)$ and $n>2$. Then $C'-e_1$ satisfies conditions of
Theorem \ref{clique}, so $\zeta^{-1}(C')=-\zeta^{-1}(C'/e_1)$ by
deletion-contraction. But $N(C'/e_1)$ is a minimal face of
$N(C/e_1)$ such that $\cap (C'/e_1)$ is even, which implies
$\zeta^{-1}(C'/e_1)=(-1)^{n-2}2$.

\item[Case 2:] Let $C'=\{e_1,\ldots,e_n\}\subset C$ be an odd
clutter such that $N(C')$ is the $n$-cycle with no chords. We find
by induction and deletion-contraction that
$\zeta^{-1}(C')=(-1)^{n-1}2$.
\end{itemize}

In both cases it follows from $(\ast)$ that either $C'=C$ or $C'$
is a proper restriction of $C$, which contradicts the condition
$C$ is eulerian.

\end{proof}

Let $C$ be a clutter on the vertex set $V$. For a subclutter
$S\subset C$, let $\lambda(S)\vdash|V|$ be the partition of the
number of vertices $|V|$ whose parts are equal to the numbers of
vertices of connected components of $S$. The chromatic symmetric
function $\Psi(C)$ of a clutter $C$ has the following expansion in
the power sum basis of symmetric functions (Theorem 3.5,
\cite{SS})

\[\Psi(C)=\sum_{S\subset C}(-1)^{|S|}p_{\lambda(S)}.\]

\begin{proposition}
The canonical morphism $\Psi:\mathcal{H}(\mathbf{C})\rightarrow
Sym$ is an epimorphism.
\end{proposition}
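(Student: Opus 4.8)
The plan is to exploit that $\Psi$ is a morphism of Hopf algebras, hence in particular an algebra homomorphism, so its image is a subalgebra of $Sym$. It therefore suffices to exhibit, inside the image, a set of algebra generators of $Sym$. Recalling that over a field of characteristic zero the power sums $p_1, p_2, p_3, \ldots$ freely generate $Sym$, I would reduce the whole problem to showing that every $p_n$ lies in $\Psi(\mathcal{H}(\mathbf{C}))$.

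To produce the $p_n$ I would evaluate the power-sum expansion $\Psi(C)=\sum_{S\subset C}(-1)^{|S|}p_{\lambda(S)}$ on two elementary families of clutters. First, for the one-vertex discrete clutter $D_1$ the only subclutter is $S=\emptyset$, whose unique connected component is a single vertex, so $\lambda(\emptyset)=(1)$ and $\Psi(D_1)=p_1$; thus $p_1$ is in the image. Second, for $n\geq 2$ let $C_n$ be the clutter consisting of a single edge equal to the whole vertex set $V$ with $|V|=n$. Its only subclutters are $S=\emptyset$, which contributes $p_{(1^n)}=p_1^n$, and $S=\{V\}$, which has one component of size $n$ and contributes $(-1)^1 p_{(n)}=-p_n$. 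Hence $\Psi(C_n)=p_1^n-p_n$.

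Combining these computations finishes the argument. Since $\Psi(D_1)=p_1$ is in the image and the image is closed under products, $p_1^n$ lies in the image for every $n$, whence $p_n=p_1^n-\Psi(C_n)$ lies in the image for all $n\geq 1$. Consequently the image contains all power sums and therefore all of $Sym$, which proves surjectivity.

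The entire content is concentrated in the single identity $\Psi(C_n)=p_1^n-p_n$, so I do not expect a genuine obstacle; the only points requiring care are reading off the connected-component partitions $\lambda(S)$ correctly from the expansion and invoking the characteristic-zero generation of $Sym$ by the $p_n$.
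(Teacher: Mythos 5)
Your proof is correct and follows essentially the same route as the paper: both arguments rest on the identity $\Psi(\{V\})=p_1^{|V|}-p_{|V|}$ read off from Stanley's power-sum expansion, together with the multiplicativity of $\Psi$. The only cosmetic difference is that the paper packages this as an invertible transition matrix between $\{p_\lambda\}$ and the images $\Psi(D_\lambda)$ of disjoint unions of single-edge clutters, whereas you conclude by exhibiting the power sums as algebra generators of $Sym$ in characteristic zero.
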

\begin{proof}
Let $D_{[n]}=\{[n]\}$ be a clutter on $[n]=\{1,\ldots,n\}$. For a
partition
$\lambda=\{\lambda_1\geq\lambda_2\geq\cdots\geq\lambda_k\}$ set
$D_{\lambda}=D_{[\lambda_1]}\sqcup
D_{[\lambda_2]}\sqcup\ldots\sqcup D_{[\lambda_k]}$. We have
\[p_\lambda=\prod_{i=1,k}p_{\lambda_i}=\prod_{i=1,j}(\Psi(D_{\lambda_i})-\Psi(D_{[\lambda_i]}))\cdot\Psi(D_{k-j}),\]
where $j=\max\{i | \lambda_i>1\}$. On the other hand
\[\Psi(D_\lambda)=\prod_{i=1,k}\Psi(D_{[\lambda_i]})=\prod_{i=1,j}(p_{1^{\lambda_i}}-p_{\lambda_i})\cdot
p_{1^{k-j}},\] which shows that the transition matrix of bases
$\{p_\lambda\}$ and $\{\Psi(D_\lambda)\}$ is an involution.
\end{proof}

If a clutter $C$ satisfies the generalized Dehn-Sommerville
relations, i.e. $C\in
S_-(\mathcal{H}(\mathbf{C}),\zeta_\mathbf{C})$ then $\Psi(C)\in
S_-(Sym,\zeta_S)$. We say that a partition $\lambda$ is odd if all
its parts are odd. The odd subalgebra of symmetric functions is
linearly spanned by the set of all odd power sum symmetric
functions, (Proposition 7.1, \cite{ABS})

\[S_-(Sym,\zeta_S)={\rm span}\{p_\lambda|\lambda \ \mbox{is
odd}\}.\]

\begin{theorem}\label{coincide}
The clutter $C$ is eulerian if the partition $\lambda(S)$ is odd
for any subclutter $S\subset C$.
\end{theorem}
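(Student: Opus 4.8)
The plan is to compute the M\"obius character $\zeta^{-1}(C)$ directly from the power sum expansion $\Psi(C)=\sum_{S\subset C}(-1)^{|S|}p_{\lambda(S)}$ and then to invoke Lemma \ref{eulerII}. First I would specialize the power sums: since $p_j(1^m)=m$ for every $j\geq 1$, a power sum monomial specializes to $p_{\lambda(S)}(1^m)=m^{k(\lambda(S))}$, where $k(\lambda(S))$ is the number of parts of $\lambda(S)$, equivalently the number of connected components of $S$ on the vertex set $V$. By $(\ref{-1})$ the M\"obius character is the value at $m=-1$ of the chromatic polynomial, and $\chi(C,m)=\Psi(C)(1^m)=\sum_{S\subset C}(-1)^{|S|}m^{k(\lambda(S))}$, so
\[\zeta^{-1}(C)=\chi(C,-1)=\sum_{S\subset C}(-1)^{|S|}(-1)^{k(\lambda(S))}.\]

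The key point is then a parity observation. Because $\lambda(S)\vdash|V|$, the parts of $\lambda(S)$ sum to $|V|$; if they are all odd, reducing modulo $2$ gives $k(\lambda(S))\equiv|V|\pmod 2$. Hence under the hypothesis the sign $(-1)^{k(\lambda(S))}=(-1)^{|V|}$ does not depend on $S$, and the sum collapses to
\[\zeta^{-1}(C)=(-1)^{|V|}\sum_{S\subset C}(-1)^{|S|}.\]
Since the subclutters of $C$ are exactly the subsets of its edge set, the inner sum equals $(1-1)^{|C|}$, which vanishes as soon as $C$ has at least one edge. Thus $\zeta^{-1}(C)=0$ for every nondiscrete clutter $C$ satisfying the hypothesis.

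It remains to push this through all induced subclutters so that Lemma \ref{eulerII} applies. I would verify that the hypothesis is hereditary under restriction: for $\emptyset\neq I\subset V$, every subclutter $T\subset C|_I$ is also a subclutter of $C$, and its component partition computed inside $C|_I$ is obtained from $\lambda(T)\vdash|V|$ by deleting exactly the singleton parts contributed by the isolated vertices of $V\setminus I$. Deleting parts equal to $1$ from an all-odd partition leaves an all-odd partition, so the oddness hypothesis descends to $C|_I$. Applying the computation above to each restriction then shows that either $C|_I$ is discrete or $\zeta^{-1}(C|_I)=0$, which is precisely the criterion of Lemma \ref{eulerII}; hence $C$ is eulerian.

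The routine parts are the specialization $p_j(1^m)=m$ and the binomial cancellation. The step that needs genuine care is the bookkeeping in the hereditary argument: one must track the isolated vertices correctly when comparing the component partition of $T$ inside $C$ with the one inside $C|_I$, and confirm that only singleton parts are lost, so that the parity of the number of parts is preserved. This is the only place where the argument could slip, and it is exactly what upgrades the single identity $\zeta^{-1}(C)=0$ to the full eulerian condition over all restrictions.
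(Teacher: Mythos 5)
Your proof is correct, but it takes a genuinely different route from the paper's. The paper deduces the theorem from Theorem \ref{clique}: it uses the all-odd hypothesis together with inclusion--exclusion parity counts to show that $C$ is an odd clutter whose intersection graph $G(C)$ is chordal and whose nerve $N(C)$ is a flag complex, and then relies on the deletion--contraction induction behind Theorem \ref{clique}. You instead compute the M\"obius character directly: specializing Stanley's expansion $\Psi(C)=\sum_{S\subset C}(-1)^{|S|}p_{\lambda(S)}$ at $1^m$ gives $\chi(C,m)=\sum_{S\subset C}(-1)^{|S|}m^{k(\lambda(S))}$, the all-odd hypothesis forces $k(\lambda(S))\equiv |V| \pmod 2$ for every $S$ because the parts of $\lambda(S)$ sum to $|V|$, and at $m=-1$ the sum collapses to $(-1)^{|V|}(1-1)^{|C|}=0$ for any nondiscrete $C$; your hereditary check (passing to $C|_I$ only deletes singleton parts from $\lambda(T)$, so oddness of the partitions is preserved) correctly propagates this to all restrictions so that Lemma \ref{eulerII} applies. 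Your argument is shorter and essentially self-contained given the quoted power-sum expansion and identity $\zeta^{-1}(C)=\chi(C,-1)$, and it avoids the geometric analysis of the nerve altogether; the paper's argument is longer but yields extra structural information, namely that any clutter satisfying the hypothesis is an odd clutter with chordal intersection graph and flag nerve, which ties the theorem to the characterizations in Theorems \ref{clique} and \ref{converse}.
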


\begin{proof}

Particulary, we have that $\lambda(C')$ is an odd partition for
any simplex $C'\in N(C)$. Consequently the clutter $C$ is odd. We
show that $C$ satisfies conditions of Theorem $\ref{clique}$:
\begin{itemize}
\item[(i)] Suppose there is a nonchordal cycle $e_1,\ldots,e_k,
k>3$ in the intersection graph $G(C)$. As
$\lambda(\{e_1,\ldots,e_k\})$ is an odd partition, we conclude
that $|e_1\cup\ldots\cup e_k|$ is odd. On the other hand

\[|e_1\cup\ldots\cup e_k|=|e_1|+\cdots+|e_k|-|e_1\cap
e_2|-|e_2\cap e_3|-\cdots-|e_k\cap e_1|, \] which is even.

\item[(ii)] Suppose that $\{e_1,\ldots,e_k\}, k>2$ is a minimal
nonface of the nerve $N(C)$. Then

\[0=|e_1\cap\ldots\cap e_k|=|e_1\cup\ldots\cup
e_k|-\sum_{j=1}^{k-1}(-1)^{j-1}\sum_{i_1<\cdots<i_k}|e_{i_1}\cap\ldots\cap
e_{i_k}|.\]

But any of $2^k-1$ summands on the righthand side is odd since the
clutter $C$ is odd.
\end{itemize}
Hence the nerve $N(C)$ is a flag complex, i.e. it is the clique
complex of the graph $G(C)$.

\end{proof}

\begin{remark}
The eulerian clutters generate the proper subalgebra of the odd
subalgebra of clutters. There are clutters which satisfy the
generalized Dehn-Sommerville relations, but which are not
eulerian. The simple example is given by the clutter on five
vertices $C=\{\{1,2,3\}, \{2,3,4\}, \\ \{1,3,4,5\}\}$. The clutter
$C$ and the eulerian clutter $C'=\{\{1,2,3\},\{3,4,5\}\}$ have the
same chromatic symmetric function

\[\Psi(C)=\Psi(C')=p_{1,1,1,1,1}-2p_{3,1,1}+p_5.\]

\end{remark}

\section{Hopf algebra of simplicial complexes}

An abstract simplicial complex $K$ on the vertex set $V=V(K)$ is a
lower ideal in the boolean poset $\mathcal{P}(V)$ of all subsets
of $V$. An element $\sigma\in K$ is called a face of $K$. We
require that $\{v\}\in K$ for all vertices $v\in V$. The induced
subcomplex $K|_I$ of $K$ on the vertex set $I\subset V(K)$ is a
family $K|_I=\{\sigma\subset I | \sigma\in K\}$. The join of
simplicial complexes $K_1$ and $K_2$ is the complex $K_1\ast
K_2=\{\sigma\cup\tau\subset V(K_1)\sqcup V(K_2) | \sigma\in K_1,
\tau\in K_2\}$. A simplicial isomorphism is a bijection
$f:V(K_1)\rightarrow V(K_2)$ such that $\sigma\in K_1$ if and only
if $f(\sigma)\in K_2$.

The {\it independence complex} $\mathrm{Ind}(C)$ of a clutter $C$
is a simplicial complex $\mathrm{Ind}(C)=\{I\subset V(C) | C|_I \
\mbox{is discrete}\}.$ A simplicial complex $K$ determines the
clutter $C(K)$ of minimal nonfaces of $K$. These assignments are
mutually inverse $\mathrm{Ind}(C(K))=K, C(\mathrm{Ind}(C))=C$ and
satisfy the following properties

\begin{equation}\label{ind}
\mathrm{Ind}(C_1\sqcup C_2)=\mathrm{Ind}(C_1)\ast\mathrm{Ind}(C_2)
\ \ \mbox{and} \ \ \mathrm{Ind}(C)|_I=\mathrm{Ind}(C|_I),
\end{equation}
for all clutters $C_1$ and $C_2$ and for any $I\subset V(C)$.

Let $\mathcal{H}(\mathbf{K})=\oplus_{n\geq
0}\mathcal{H}_n(\mathbf{K})$ be the graded $\mathbf{k}-$vector
space linearly spanned by the set $\mathbf{K}$ of all isomorphism
classes of finite simplicial complexes. The grading is given by
the number of vertices. Let
$\mathrm{Ind}:\mathcal{H}(\mathbf{C})\rightarrow\mathcal{H}(\mathbf{K})$
be the linear extension of the map that associates the
independence complex $\mathrm{Ind}(C)$ to a clutter $C$. By the
properties $(\ref{ind})$ the space $\mathcal{H}(\mathbf{K})$ is
equipped with the Hopf algebra structure with respect to which the
map $\mathrm{Ind}$ is a Hopf algebra isomorphism. The product on
$\mathcal{H}(\mathbf{K})$ is defined by the join $K_1\ast K_2$ and
the coproduct by induced subcomplexes $\Delta(K)=\sum_{I\subset
V(K)}K|_I\otimes K|_{V(K)\setminus I}$. The space
$\mathcal{H}(\mathbf{K})$ is a graded connected commutative and
cocommutative Hopf algebra. The unit element is the simplicial
complex $K_\emptyset$ on the empty set of vertices and the counit
is $\epsilon(K)=\left\{\begin{array}{ll} 1, & K=K_\emptyset \\ 0,
& \mbox{otherwise}\end{array}\right..$ The antipode is determined
by $(\ref{antiph})$

\[S(K)=\sum_{k\geq 1}(-1)^k\sum_{I_1\sqcup\ldots\sqcup
I_k=V(K)}K|_{I_1}\ast\cdots\ast K|_{I_k},\]where the sum goes over
all ordered decompositions of the vertex set.

The map $\mathrm{Ind}$ become an isomorphism of CHA's if we take
$\zeta_\mathbf{K}(K)=\left\{\begin{array}{ll} 1, & K \ \mbox{is a simplex} \\
0, & \mbox{otherwise}\end{array}\right.$.

A {\it partition} of the simplicial complex $K$ is the ordered
decomposition $(I_1,\ldots,I_k)$ of the vertex set $V(K)$ such
that all induced subcomplexes $K|_{I_1},\ldots,K|_{I_k}$ are
simpleces. For a composition $\alpha=(a_1,\ldots,a_k)\models n$
the coefficient $(\zeta_\mathbf{K})_\alpha(K)$ is the number of
partitions of $K$ whose parts have the prescribed numbers of
elements $|I_j|=a_j, 1\leq j\leq k$. The canonical morphism
$\Psi:(\mathcal{H}(\mathbf{K}),\zeta_\mathbf{K})\longrightarrow(QSym,\zeta_Q),$
given by $(\ref{canonical})$ with

\[\Psi(K)=\sum_{\alpha\models
n}(\zeta_\mathbf{K})_\alpha(K)M_\alpha, \
K\in\mathcal{H}_n(\mathbf{K}),\] assigns the generating function
of all partition functions to a simplicial complex $K$. A {\it
partition function} of a simplicial complex $K$ is the map
$f:V(K)\rightarrow\mathbb{N}$ such that $f^{-1}(\{i\})$ is a face
of $K$ for all $i\in f(V(K))$. Then $\Psi(K)=\sum_{f}\prod_{v\in
V(K)}x_{f(v)},$ where the sum is over all partition functions.

The principal specialization
$\chi(K,m)=\zeta_\mathbf{K}^m(K)=\Psi(K)(1^m)$ defines the {\it
partition polynomial} of a simplicial complex $K$ which counts the
number of partition functions $f:V(K)\rightarrow[m]$. The value
$\chi(K,-1)$ is of special interest and by the antipode formula we
have

\[\chi(K,-1)=\zeta_\mathbf{K}^{-1}(K)=\zeta_\mathbf{K}\circ S(K)=\sum_{\alpha\models
n}(-1)^{k(\alpha)}(\zeta_\mathbf{K})_\alpha(K).\]

The value of the Euler character
$\chi_{\mathbf{K}}=\overline{\zeta}_\mathbf{K}\zeta_\mathbf{K}$ at
a simplicial complex $K$ is given by
\[\chi_{\mathbf{K}}(K)=\sum_{(I,I^c) \ \mbox{partition}}(-1)^{|I|}.\]

\begin{definition}
A simplicial complex $K$ is eulerian if
$\chi_\mathbf{K}(K|_I)=\epsilon(K|_I)$ for all $I\subset V(K)$.
\end{definition}

As in the case of hypergraphs we have an equivalent
characterization

\begin{lemma}
A simplicial complex $K\neq K_\emptyset$ is eulerian if and only
if either $K|_I$ is a simplex or $\zeta_\mathbf{K}^{-1}(K|_I)=0,$
for all $\emptyset\neq I\subset V(K)$.
\end{lemma}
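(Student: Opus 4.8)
The plan is to imitate the proof of Lemma~\ref{eulerII} line for line, with the simplex playing the role that the discrete hypergraph played there. First I would record the purely formal equivalence, exactly as in the hypergraph case: for any $\emptyset\neq I\subset V(K)$ the condition $\chi_\mathbf{K}(K|_I)=\epsilon(K|_I)$ is equivalent to $\zeta_\mathbf{K}^{-1}(K|_I)=\overline{\zeta}_\mathbf{K}(K|_I)$, since $\chi_\mathbf{K}=\overline{\zeta}_\mathbf{K}\zeta_\mathbf{K}$ and the two characters $\zeta_\mathbf{K}^{-1}$ and $\overline{\zeta}_\mathbf{K}$ agree precisely when the Euler character returns the counit. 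Thus the definition of eulerian reduces to asking, at every nonempty restriction, whether this forced equality of characters already holds on its own.

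Next I would split on whether $K|_I$ is a simplex. If $K|_I$ is a simplex on $|I|$ vertices, then every subset of its vertices is a face, so \emph{every} map $f:I\rightarrow[m]$ is a partition function; hence the partition polynomial is $\chi(K|_I,m)=m^{|I|}$, and evaluating at $m=-1$ gives $\zeta_\mathbf{K}^{-1}(K|_I)=(-1)^{|I|}$. On the other hand $\overline{\zeta}_\mathbf{K}(K|_I)=(-1)^{|I|}\zeta_\mathbf{K}(K|_I)=(-1)^{|I|}$, because $\zeta_\mathbf{K}$ evaluates a simplex to $1$. The two characters therefore coincide, and the eulerian condition at such an $I$ is automatically satisfied. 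If instead $K|_I$ is not a simplex, then $\zeta_\mathbf{K}(K|_I)=0$, whence $\overline{\zeta}_\mathbf{K}(K|_I)=(-1)^{|I|}\zeta_\mathbf{K}(K|_I)=0$, and the required equality $\zeta_\mathbf{K}^{-1}(K|_I)=\overline{\zeta}_\mathbf{K}(K|_I)$ collapses to $\zeta_\mathbf{K}^{-1}(K|_I)=0$. Combining the two cases yields exactly the stated dichotomy.

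I expect the only substantive point to be the computation $\chi(K|_I,m)=m^{|I|}$ for a simplex, that is, the observation that partition functions on a simplex are entirely unconstrained; this is the analogue of $\chi(D_n,m)=m^n$ for the discrete hypergraph, and everything surrounding it is formal. As a sanity check, or indeed as a fully self-contained alternative, one can bypass this computation altogether. Since $\mathrm{Ind}$ is an isomorphism of CHA's that carries discrete clutters to simplices and satisfies $\mathrm{Ind}(C)|_I=\mathrm{Ind}(C|_I)$ by~(\ref{ind}), the present statement is precisely the image of Lemma~\ref{eulerII} under $\mathrm{Ind}$ applied to the clutter $C=C(K)$: the restriction $K|_I$ is a simplex if and only if $C(K)|_I$ is discrete, and $\zeta_\mathbf{K}^{-1}(K|_I)=\zeta_\mathbf{C}^{-1}(C(K)|_I)$ under the isomorphism. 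Either route delivers the lemma, and I would present the direct mirror as the main argument while noting the $\mathrm{Ind}$-transfer as the conceptual reason the two lemmas are the same statement.
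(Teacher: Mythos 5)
Your argument is correct and is exactly the route the paper intends: the paper gives no separate proof for this lemma, remarking only that it follows ``as in the case of hypergraphs,'' and your proof is precisely the line-for-line transcription of the proof of Lemma~\ref{eulerII} with the simplex computation $\chi(K|_I,m)=m^{|I|}$ replacing $\chi(D_n,m)=m^n$. Your closing observation that the statement is the image of Lemma~\ref{eulerII} under the CHA isomorphism $\mathrm{Ind}$ is also the conceptual justification implicit in the paper.
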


The eulerian subalgebra $\mathcal{E}(\mathbf{K})$ is generated by
all isomorphism classes of eulerian simplicial complexes. It is a
subalgebra $\mathcal{E}(\mathbf{K})\subset
S_-(\mathcal{H}(\mathbf{K}), \zeta_\mathbf{K})$ of the odd algebra
of CHA of simplicial complexes. Consequently the relations
$(\ref{relation})$ are satisfied by coefficients
$(\zeta_\mathbf{K})_\alpha, \alpha\models |V(K)|$  of the eulerian
simplicial complexes $K\in\mathcal{E}(\mathbf{K})$.

\begin{example}
Suppose that $\{i,j\}$ is a minimal nonface of a simplicial
complex $K$. Then $\chi(K|_{\{i,j\}})=-2,$ so $K$ is not eulerian.
The nessesary condition for $K$ to be an eulerian is that its
1-skeleton is the complete graph on $V(K)$.
\end{example}

The isomorphism $\mathrm{Ind}$ gives the complete characterization
of eulerian simplicial complexes.

\begin{theorem}
A simplicial complex $K$ is eulerian $K\in\mathcal{E}(\mathbf{K})$
if and only if it is the independence complex $K=Ind(C)$ of an
eulerian clutter $C\in\mathcal{E}(\mathbf{C})$ or equivalently if
and only if its clutter $C(K)$ of minimal nonfaces is eulerian
$C(K)\in\mathcal{E}(\mathbf{C})$.
\end{theorem}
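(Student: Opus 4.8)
The plan is to leverage the fact, recorded above, that $\mathrm{Ind}$ is an isomorphism of combinatorial Hopf algebras from $(\mathcal{H}(\mathbf{C}),\zeta_\mathbf{C})$ onto $(\mathcal{H}(\mathbf{K}),\zeta_\mathbf{K})$, together with its compatibility with restrictions from $(\ref{ind})$. Because $\mathrm{Ind}$ intertwines the characters, $\zeta_\mathbf{K}\circ\mathrm{Ind}=\zeta_\mathbf{C}$, and preserves the grading, it also intertwines the conjugate characters, $\overline{\zeta}_\mathbf{K}\circ\mathrm{Ind}=\overline{\zeta}_\mathbf{C}$; being a Hopf algebra morphism it commutes with the antipode, whence $\zeta_\mathbf{K}^{-1}\circ\mathrm{Ind}=\zeta_\mathbf{C}^{-1}$; and being a coalgebra morphism it commutes with the coproduct, so the Euler characters satisfy $\chi_\mathbf{K}\circ\mathrm{Ind}=\chi_\mathbf{C}$. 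Finally, $\mathrm{Ind}$ preserves the counit, $\epsilon\circ\mathrm{Ind}=\epsilon$.

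I would then translate the eulerian condition through these identities. For an arbitrary clutter $C$ put $K=\mathrm{Ind}(C)$, so that $V(K)=V(C)$ and, by $(\ref{ind})$, $K|_I=\mathrm{Ind}(C)|_I=\mathrm{Ind}(C|_I)$ for every $I\subset V(C)$. The intertwining identities then give
\[\chi_\mathbf{K}(K|_I)=\chi_\mathbf{K}(\mathrm{Ind}(C|_I))=\chi_\mathbf{C}(C|_I), \qquad \epsilon(K|_I)=\epsilon(\mathrm{Ind}(C|_I))=\epsilon(C|_I).\]
Hence $\chi_\mathbf{K}(K|_I)=\epsilon(K|_I)$ holds for all $I\subset V(K)$ if and only if $\chi_\mathbf{C}(C|_I)=\epsilon(C|_I)$ holds for all $I\subset V(C)$; that is, $K=\mathrm{Ind}(C)$ is an eulerian simplicial complex precisely when $C$ is an eulerian clutter. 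This already establishes the first stated equivalence.

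For the second equivalence I would invoke the mutual inverse property $\mathrm{Ind}(C(K))=K$ and $C(\mathrm{Ind}(C))=C$. Given an eulerian $K$, set $C=C(K)$; then $K=\mathrm{Ind}(C)$, so by the previous paragraph $C=C(K)$ is eulerian. Conversely, if $C(K)$ is eulerian then $K=\mathrm{Ind}(C(K))$ is the independence complex of an eulerian clutter, hence eulerian. Thus the three conditions $K\in\mathcal{E}(\mathbf{K})$, $K=\mathrm{Ind}(C)$ for some eulerian $C$, and $C(K)\in\mathcal{E}(\mathbf{C})$ all coincide.

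The only point that needs genuine care is the verification that $\mathrm{Ind}$ intertwines the Euler characters, $\chi_\mathbf{K}\circ\mathrm{Ind}=\chi_\mathbf{C}$. This is not a consequence of $\mathrm{Ind}$ being merely a linear bijection; it holds because $\mathrm{Ind}$ is an isomorphism of combinatorial Hopf algebras, so it commutes with the coproduct and the antipode and fixes the characters, and $\chi=\overline{\zeta}\zeta$ is assembled from exactly these data. Once this is in hand, everything else is bookkeeping with the inverse maps $\mathrm{Ind}$ and $C(\cdot)$ and the restriction identity $(\ref{ind})$.
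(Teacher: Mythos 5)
Your argument is correct and follows exactly the route the paper intends: the paper gives no written proof, simply asserting that the isomorphism $\mathrm{Ind}$ of combinatorial Hopf algebras yields the characterization, and your proposal supplies precisely the missing details (intertwining of $\zeta$, $\overline{\zeta}$, $\zeta^{-1}$, and hence $\chi$, plus the restriction identity and the mutual inverse property of $\mathrm{Ind}$ and $C(\cdot)$). Nothing further is needed.
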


\begin{example}
1. Let $\Delta[n]$ be a simplex on $n$ vertices. The boundary
complex $\partial\Delta[n]$ is eulerian if and only if $n$ is odd.

2. Let $C$ be a clutter such that the nerve complex $N(C)$ is a
tree. Then the independence complex $\mathrm{Ind}(C)$ is eulerian
if and only if $C$ is odd.

\end{example}

\section*{Acknowledgments}

The first two authors were supported by Ministry of Science of
Republic of Serbia, project 174034. We thank to N. Erokhovets for
valuable comments and for appointing us to the mistake in the
formulation of \cite[Theoerem 8.11]{GS}.




\bibliographystyle{model1a-num-names}
\bibliography{<your-bib-database>}

\begin{thebibliography}{9}

\bibitem{AB} A. A. Ayzenberg, V. M. Buchstaber, Nerve Complexes and Moment-Angle Spaces of Convex
Polytopes, Tr. Mat. Inst. im. V.A. Steklova, Ross. Akad. Nauk 275
(2011) 22-–54. [Proc. Steklov Inst. Math. 275 (2011) 15--46.]

\bibitem{ABS} M. Aguiar, N. Bergeron, F. Sottile, Combinatorial Hopf
algebras and generalized Dehn-Sommerville relations, Compositio
Mathematica 142 (2006) 1-30.

\bibitem{BB} M. Bayer, L. Billera, Generalized Dehn-Sommerville relations for polytopes, spheres,
and Eulerian partially ordered sets, Invent. Math. 79 (1985)
143-157.

\bibitem{GS} V. Gruji\'c, T. Stojadinovi\'c, Hopf
algebra of building sets, Electron. J. Combin. 19(4) (2012) P42.

\bibitem{JR} S. Joni, G.-C. Rota, Coalgebras and bialgebras in combinatorics, Stud. Appl.
Math. 61 (1979) 93-139.

\bibitem{EC} R. Stanley, Enumerative combinatorics. Vol. 2, Cambridge Univ. Press, Cambridge,
1999.

\bibitem{SS} R. Stanley, Graphs colorings and related symmetric
functions: Ideas and Apllications, Discrete Math. 193 (1998)
267-286.

\end{thebibliography}







\end{document}